\title{The Long-Time Behavior Of The Ricci Tensor Under The Ricci Flow}
\author{Christian Hilaire}
\date{}
\newtheorem{thm}{Theorem}
\newtheorem{lma}[thm]{Lemma}
\newtheorem{crly}[thm]{Corollary}
\begin{document}

\maketitle

\begin{abstract}
We show that, given an immortal solution to the Ricci flow on a closed manifold with uniformly bounded curvature and diameter, the Ricci tensor goes to zero as $t \rightarrow \infty$. We also show that if there exists an immortal solution on a closed 3-dimensional manifold such that the product $diam\left(M;g(t)\right)^2||Rm||_{\infty}(t)$ is uniformly bounded, then this solution must be of type III.
\end{abstract}
 
\section{Introduction}
  The Ricci flow equation, introduced by R. Hamilton in \cite{hamilton1}, is the nonlinear partial differential equation
  \begin{align}
  \frac{\partial}{\partial t}g(t)=-2Ric\left(g(t)\right) \label{ricciflow}
  \end{align}
  where $g(t)$ is a Riemannian metric on a fixed smooth manifold $M$. Hamilton showed that, given any Riemannian metric $g_0$ on a closed manifold $M$, there exists $T>0$ such that the equation (\ref{ricciflow}) has a solution $g(t)$ defined for $t \in [0,T)$ and which satisfies $g(0)=g_0$. A solution to the Ricci flow equation which is defined for all $t \geq 0$ is called an immortal solution. If the solution is defined for all $t \in \mathbb{R}$, it is said to be eternal.
   
   In this paper, we first consider immortal solutions to the Ricci flow which have a uniform bound on the curvature and a uniform upper bound on the diameter. We will use the following notation. Let $M$ be an $n$-dimensional closed manifold and let $g(t)$ be an immortal solution to the Ricci flow on $M$. For each $t \geq 0$, we set 
 \begin{align*} 
 ||Rm||_{\infty}(t)=\max\{|Rm|(x,t) \, : \, x \in M \}
 \end{align*}
 and
 \begin{align*} 
 ||Ric||_{\infty}(t)=\max\{|Ric|(x,t) \, : \, x \in M \}.
 \end{align*}
 The diameter of the induced metric structure on $M$ will be denoted by $diam\left(M;g(t)\right)$ for each $t \geq 0$.  The following theorem basically states that given an immortal solution with a uniform bound on the curvature and on the diameter, the Ricci tensor must tend to zero in a uniform way as $t \rightarrow \infty$.

\begin{thm}
\label{big1}
Given $n \in \mathbb{N}$ and numbers $K,D >0$, there exists a function $F_{n,K,D}:[0,\infty) \rightarrow [0,\infty)$  with the following properties
\begin{enumerate}
\item  $\displaystyle \lim_{t \rightarrow \infty}F_{n,K,D}(t)=0$.
 \item If $g(t)$ is an immortal solution to the Ricci flow on a closed manifold $M$ of dimension $n$ such that $||Rm||_{\infty}(t) \leq K$ and $diam\left(M;g(t)\right) \leq D$ for all $t \geq 0$, then $||Ric||_{\infty}(t) \leq F_{n,K,D}(t)$ for all $t \geq 0$.
 \end{enumerate}
\end{thm}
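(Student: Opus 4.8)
The plan is to argue by contradiction and reduce the statement to a rigidity property of eternal solutions. Define $F_{n,K,D}(t)$ to be the supremum of $\|Ric\|_\infty(t)$ over all immortal solutions meeting the hypotheses; since $|Ric|$ is controlled by $|Rm|$ this is finite, property (2) holds by construction, and only $\lim_{t\to\infty}F_{n,K,D}(t)=0$ requires proof. If this failed there would be $\varepsilon>0$, times $t_k\to\infty$, closed $n$-manifolds $M_k$ carrying immortal solutions $g_k(t)$ with $\|Rm\|_\infty\le K$, $\operatorname{diam}\le D$, and points $x_k$ with $|Ric|_{g_k}(x_k,t_k)\ge\varepsilon$. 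By Shi's derivative estimates all covariant derivatives of curvature become bounded in terms of $n$ and $K$ once $t\ge 1$, and plugging these into the evolution equation for $|Ric|^2$ produces $\eta=\eta(n,K,\varepsilon)>0$ with $|Ric|_{g_k}(x_k,t)\ge\varepsilon/2$ on $[t_k-\eta,t_k]$. Translating time by $t_k$ and passing to a pointed limit of the flows $g_k(\,\cdot\,+t_k)$, I would obtain an eternal solution $(N,h(t))$, $t\in\mathbb{R}$, with $\|Rm\|_h\le K$, with $|Ric|_h\ge\varepsilon/2$ near the limiting basepoint at $t=0$, and — because the diameters stay $\le D$ — with $N$ closed and $\operatorname{diam}(N;h(t))\le D$. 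So it suffices to show that every eternal Ricci flow on a closed manifold with uniformly bounded curvature and diameter is Ricci-flat.

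For the latter I would run Perelman's $\lambda$-functional $\lambda(g)=\min\operatorname{spec}(-4\Delta+R)$. It is nondecreasing along the flow, and comparing with the constant test function gives $\min R\le\lambda(g)\le\max R$, so along $h(t)$ it stays in $[-C(n)K,C(n)K]$ and converges to limits $\lambda_\pm$ as $t\to\pm\infty$, with $\lambda_-\le\lambda_+$. Picking $s_j\to+\infty$, the flows $h(\,\cdot\,+s_j)$ subconverge to an eternal solution whose $\lambda$ is constant in time and equal to $\lambda_+$; by Perelman's monotonicity formula a flow with constant $\lambda$ is a gradient steady soliton at every time, and a steady soliton on a closed manifold is Ricci-flat, whence $\lambda_+=0$. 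Running the same argument with $s_j\to-\infty$ gives $\lambda_-=0$, so $\lambda(h(t))\equiv 0$; then $h(t)$ is itself a gradient steady soliton for all $t$, hence Ricci-flat, contradicting $|Ric|_h\ge\varepsilon/2$ at the basepoint.

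The genuine obstacle — and the place where the diameter hypothesis is indispensable — is collapse, which can break every passage to a limit above. Without a lower injectivity-radius bound one cannot invoke Hamilton's compactness theorem directly; the remedy I would use is to pass to local covers and apply the equivariant (Cheeger--Gromov) version of Hamilton's theorem from the structure theory of bounded-curvature collapse, obtaining eternal limit flows of bounded geometry on complete manifolds equipped with limit groups of isometries with compact quotient. The soliton-rigidity step then has to be proved equivariantly: a gradient steady soliton of bounded geometry that covers a compact quotient is still Ricci-flat, and this is precisely the statement that fails once the diameter is allowed to grow — witness the expanding hyperbolic solution $g(t)=(1+2(n-1)t)\,g_{hyp}$, whose curvature tends to zero while $|Ric|$ stays constant, and the cigar soliton (or its products), which appears as a noncompact collapsed limit. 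Carrying out this equivariant rigidity, and checking that the curvature, diameter and volume bounds all survive the two nested limits (the time-translation limit and the $s_j\to\pm\infty$ limit used to produce the steady solitons), is the main technical work; granting it, the contradiction closes the proof and the uniformity built into $F_{n,K,D}$ needs nothing further.
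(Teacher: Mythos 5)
Your overall skeleton agrees with the paper's: define $F_{n,K,D}(t)$ as the supremum over admissible solutions, argue by contradiction, translate time by $t_k$, pass to an eternal limit, and contradict a rigidity statement for eternal solutions. But the proof has a genuine gap exactly at its center. You correctly identify that collapse prevents a direct application of Hamilton's compactness theorem and that the limit must be taken in an equivariant sense (limit flows on complete manifolds, or on \'etale groupoids as in Lott's work, with compact quotient); however, you then declare that the rigidity step ``has to be proved equivariantly'' and grant it as ``the main technical work.'' That equivariant rigidity statement \emph{is} the theorem's essential content --- it is precisely Lemma~\ref{main} of the paper (every eternal Ricci flow on a smooth \'etale groupoid with compact connected orbit space is Ricci flat) --- so the proposal defers rather than proves the key step. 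Moreover, the route you sketch for rigidity, Perelman's $\lambda$-functional and steady-soliton classification, does not transfer in any obvious way to the collapsed setting: $\lambda$ is a spectral/integral quantity on a compact manifold, and in the equivariant or groupoid picture the total space is noncompact (only the orbit space is compact), so the definition of $\lambda$, its monotonicity, and the extraction of the further limits $s_j\to\pm\infty$ (which face the same collapse problem again) all require justification you do not supply. Your first paragraph's assertion that the time-translated limit $(N,h(t))$ is closed with $\operatorname{diam}\le D$ is also inconsistent with your own later (correct) acknowledgment that no injectivity radius bound is available.

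The paper closes this gap with much softer tools: Lott's compactness theorem for Ricci flows with bounded curvature produces an eternal limit on a pointed \'etale groupoid, the diameter bound forces the orbit space to be compact and connected, and then only \emph{pointwise, $\mathcal{G}$-invariant} quantities are used --- a weak maximum principle for invariant scalars on groupoids (Lemma~\ref{weakmaxlem}) gives $R\ge -n/(2t)$, hence $R\ge 0$ for an eternal solution, the strong maximum principle propagates positivity of $R$ across orbits of the connected orbit space, and a positive lower bound on $R$ would force a finite-time singularity, so $R\equiv 0$ and then $Ric\equiv 0$ from the evolution equation of $R$. Because scalar curvature descends to the compact orbit space, no integral functional and no non-collapsing hypothesis is ever needed. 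If you want to salvage your $\lambda$-functional route you would have to develop Perelman-type functionals on groupoids and re-prove monotonicity there; as written, the proposal is incomplete at the decisive step, while the maximum-principle argument is both sufficient and more elementary. (Incidentally, the Shi-estimate step keeping $|Ric|\ge\varepsilon/2$ on a backward time interval is unnecessary: smooth pointed convergence already transports the inequality $|Ric|(x_k,t_k)>\varepsilon$ to the basepoint orbit at time $0$.)
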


\begin{crly}
 Suppose $g(t)$ is an immortal solution to the Ricci flow on a closed $n$-dimensional manifold $M$ and suppose that, for some constants $K, D >0$, we have $||Rm||_{\infty}(t) \leq K$ and $diam\left(M;g(t)\right) \leq D$ for every $t \geq 0$. Then $||Ric||_{\infty}(t) \rightarrow 0$ as $t \rightarrow \infty$.
 \end{crly}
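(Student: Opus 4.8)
The Corollary is immediate from Theorem~\ref{big1}: with $K,D$ as in its hypotheses one has $0\le\|Ric\|_\infty(t)\le F_{n,K,D}(t)$ and the right-hand side tends to $0$. So the content is Theorem~\ref{big1}, and the plan below is for that statement (equivalently, for the uniform form of the Corollary). The natural approach is by contradiction and compactness. Put $F_{n,K,D}(t):=\sup\{\|Ric_g\|_\infty(t)\}$, the supremum over all immortal solutions $g$ on closed $n$-manifolds with $\|Rm_g\|_\infty\le K$ and $\mathrm{diam}(\cdot;g)\le D$ on $[0,\infty)$; then $F_{n,K,D}(t)\le c(n)K<\infty$ automatically, and property~(2) holds by definition, so the issue is property~(1). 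If~(1) failed there would exist $\varepsilon>0$, immortal solutions $(M_j,g_j(t))$ with these bounds, times $t_j\to\infty$, and points $x_j\in M_j$ with $|Ric_{g_j}|(x_j,t_j)\ge\varepsilon$, and I would derive a contradiction.

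Two preliminary facts, with constants depending only on $n,K,D$, are needed for any such solution. First, $\|Rm\|_\infty\le K$ forces $Ric\ge-(n-1)Kg$, so Bishop--Gromov together with $\mathrm{diam}\le D$ gives $\mathrm{Vol}(M;g(t))\le V_0(n,K,D)$. Second, $R_{\min}(t)\to 0$ at a rate depending only on $n,K$: by the maximum principle applied to $\partial_tR=\Delta R+2|Ric|^2\ge\Delta R+\tfrac2nR^2$, $R_{\min}(t)$ is nondecreasing, and it must satisfy $R_{\min}(t)\le 0$ for all $t$ (otherwise the comparison ODE $y'=\tfrac2ny^2$ would force a finite-time singularity, contradicting immortality); integrating the differential inequality for $R_{\min}$ then gives $\int_0^\infty R_{\min}(t)^2\,dt\le c(n)K$, and since $R_{\min}^2$ is nonincreasing this yields $-R_{\min}(t)\le C(n,K)\,t^{-1/2}$.

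With these in hand I would study the pointed, time-shifted solutions $(M_j,g_j(t_j+\,\cdot\,),x_j)$, which are defined on $[-t_j,\infty)$ and have all iterated covariant derivatives of $Rm$ uniformly bounded on $[-t_j+1,\infty)$ by Shi's estimates. In the \emph{noncollapsed} case --- $\mathrm{inj}_{g_j(t_j)}(x_j)$ bounded below --- Hamilton's compactness theorem produces an eternal limit $(M_\infty,g_\infty(t),x_\infty)$, $t\in\mathbb R$; the diameter bound makes $M_\infty$ closed with $\mathrm{diam}(M_\infty;g_\infty(t))\le D$; $|Rm_{g_\infty}|\le K$; $|Ric_{g_\infty}|(x_\infty,0)\ge\varepsilon$, so $g_\infty$ is not flat; and, by the rate above, $R_{g_\infty}(\cdot,t)=\lim_j R_{g_j}(\cdot,t_j+t)\ge\lim_j R_{\min,g_j}(t_j+t)=0$, so $R_{g_\infty}\ge 0$ everywhere. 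But a closed eternal solution with $R\ge 0$ is flat: $R_{\min,g_\infty}(t)$ is nondecreasing, $\ge 0$, and cannot be positive (finite-time singularity in forward time), hence $\equiv 0$; being attained on a closed manifold, $R_{g_\infty}$ vanishes at some point, so the strong maximum principle for $\partial_t R_{g_\infty}=\Delta R_{g_\infty}+2|Ric_{g_\infty}|^2$ forces $R_{g_\infty}\equiv 0$, hence $Ric_{g_\infty}\equiv 0$, on $M_\infty\times(-\infty,0]$. This contradicts $|Ric_{g_\infty}|(x_\infty,0)\ge\varepsilon$.

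The main obstacle is the \emph{collapsed} case, which is exactly where the diameter hypothesis must do its work. Here I would unwrap the collapse before passing to a limit --- via local universal covers of unit balls, or, as in Cheeger--Fukaya--Gromov theory and Lott's analysis of the long-time behavior of Ricci flow, by lifting to orthonormal frame bundles carrying metrics adapted to the local nilpotent structure (which under a uniform diameter bound one expects to be abelian, since non-abelian collapse tends to inflate the diameter). The curvature bound and the estimate $R_{\min}(t)\to 0$ are local and survive the unwrapping, so one again arrives at a non-flat eternal solution with $R\ge 0$ and bounded curvature; the point to be extracted from $\mathrm{diam}\le D$ is that this limit still lives on a compact model, which rules out the cigar- and steady-soliton-type limits whose non-flat factors are simply connected and hence cannot be produced by unwrapping short loops. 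Granting that reduction, the maximum-principle rigidity of the previous paragraph again gives flatness and the contradiction. Carrying out this collapse step rigorously --- tracking the induced geometry through the limit and verifying that no non-flat soliton can appear under the diameter bound --- is the technical heart of the proof.
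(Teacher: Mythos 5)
Your deduction of the Corollary from Theorem~\ref{big1} is exactly right and is all the paper itself does for this statement; if Theorem~\ref{big1} may be cited, the Corollary is a one-line consequence. The issue is with the plan you then give for Theorem~\ref{big1}. Your noncollapsed case is essentially the paper's proof of Theorem~\ref{big2} (the version with an injectivity radius lower bound): time-shift, apply Hamilton's compactness theorem, get a closed eternal limit, and kill it with the weak and strong maximum principles applied to $R$. That part is sound.

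The collapsed case, however, is precisely where your proposal stops being a proof: you describe unwrapping the collapse via local covers or frame bundles, assert that the diameter bound should rule out cigar- or steady-soliton-type limits, and then say ``granting that reduction'' the rigidity argument applies. That reduction is the whole difficulty, and it is not carried out. The paper's resolution is different in kind and avoids any unwrapping-by-hand: it invokes Lott's compactness theorem \cite{lott}, which, under a uniform curvature bound alone (no injectivity radius hypothesis), gives subconvergence of the time-shifted pointed solutions to an eternal Ricci flow solution on a pointed \'etale groupoid $(\hat{\mathcal{G}},\hat{M},O_{\hat{x}})$; the uniform diameter bound then makes the orbit space a compact connected length space (via Hopf--Rinow). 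The contradiction is supplied by Lemma~\ref{main}: every eternal solution on a smooth \'etale groupoid with compact connected orbit space is Ricci flat, proved by transferring the weak maximum principle (Lemma~\ref{weakmaxlem}) and the strong maximum principle to the groupoid setting, which works because all curvature quantities are invariant under the associated pseudogroup and so descend to the compact orbit space. In other words, the missing idea in your outline is to replace ``unwrap the collapse and classify what limits can occur'' by ``pass to the groupoid limit, where the closed-manifold maximum-principle argument goes through verbatim.'' Without that (or some equivalent device), your collapsed case remains an acknowledged gap rather than a proof.
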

 
 The next theorem is about the type of a certain class of immortal solutions on three-dimensional closed manifolds. Recall that an immortal solution $g(t)$ to the Ricci flow on a closed manifold $M$ is said to be of type III if $ \displaystyle sup_{M\times [0,\infty)}t|Rm|<\infty$.

\begin{thm}
\label{bigdos}
 Suppose $g(t)$ is an immortal solution to the Ricci flow on a closed 3-dimensional manifold $M$ and suppose there exists a constant $C>0$ such that 
 \begin{align}
 diam\left(M;g(t)\right)^2||Rm||_{\infty}(t) < C \label{kontrol}
 \end{align}
 for all $t \geq 0$. Then, the solution $g(t)$ is a type III solution.
 \end{thm}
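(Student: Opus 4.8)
The plan is to argue by contradiction through a blow-up analysis, with the hypothesis (\ref{kontrol}) entering at two decisive points: it will force the blow-up limit to be closed, and it will upgrade a curvature bound on a metric ball to a curvature bound on all of $M$. So suppose $g(t)$ is not of type III. Since $\sup_{M\times[0,\infty)}t|Rm|=\sup_{t\geq 0}t\,\|Rm\|_\infty(t)$ and $t\mapsto t\,\|Rm\|_\infty(t)$ is bounded on compact intervals, I may choose $t_i\to\infty$ with $t_i\|Rm\|_\infty(t_i)\to\infty$. A point-picking argument of Hamilton then produces, after passing to a subsequence, points $x_i\in M$ such that, writing $Q_i:=|Rm|(x_i,t_i)$, one has $Q_i\to\infty$, $Q_it_i\to\infty$, and $|Rm|\leq 4Q_i$ on $B_{g(t_i)}(x_i,A_iQ_i^{-1/2})\times[t_i-A_iQ_i^{-1},t_i]$ for some $A_i\to\infty$. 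Here (\ref{kontrol}) gives $\mathrm{diam}(M;g(t_i))^2\leq C/\|Rm\|_\infty(t_i)\leq CQ_i^{-1}$, so for large $i$ the ball $B_{g(t_i)}(x_i,A_iQ_i^{-1/2})$ is all of $M$; hence $|Rm|\leq 4Q_i$ on $M\times[t_i-A_iQ_i^{-1},t_i]$, and in particular $\|Rm\|_\infty(t_i)\leq 4Q_i$. Feeding this into the evolution inequality $\partial_t|Rm|\leq \Delta|Rm|+c_n|Rm|^2$ and integrating the resulting ODE for $\|Rm\|_\infty$ propagates the bound a fixed amount forward: $|Rm|\leq 8Q_i$ on $M\times[t_i-A_iQ_i^{-1},\,t_i+\delta_nQ_i^{-1}]$ for some $\delta_n>0$ depending only on $n$.

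Next I parabolically rescale, putting $g_i(s):=Q_i\,g(t_i+sQ_i^{-1})$, a Ricci flow on $M$ whose interval of definition exhausts $(-\infty,\infty)$. Then $|Rm_{g_i}|\leq 8$ on $M\times[-A_i,\delta_n]$, $|Rm_{g_i}|(x_i,0)=1$, and $\mathrm{diam}(M;g_i(0))^2=Q_i\,\mathrm{diam}(M;g(t_i))^2<C$. Since $g$ is an immortal solution on a closed manifold, the monotonicity of Perelman's $\mu$-functional together with the finiteness of $\nu(g(0))$ yields a $\kappa$-non-collapsing estimate that is uniform in $t$, so the metrics $g_i(0)$ are non-collapsed at $x_i$ at scale $1$ with a fixed $\kappa>0$. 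Hamilton's compactness theorem then produces a subsequential pointed $C^\infty$ limit $(M_\infty,g_\infty(s),x_\infty)$ defined for $s\in(-\infty,\delta_n]$; the uniform diameter bound forces $M_\infty$ to be closed, and $|Rm_{g_\infty}|(x_\infty,0)=1$, so $g_\infty$ is not flat. Because $\dim M=3$ and $Q_i\to\infty$, the Hamilton--Ivey pinching estimate passes to the limit and gives $Rm_{g_\infty}\geq 0$ on all of $M_\infty\times(-\infty,\delta_n]$.

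To finish, I derive a contradiction. From $Rm_{g_\infty}\geq 0$ we get $R_{g_\infty}\geq 0$. If $R_{g_\infty}$ vanished at some point, the strong maximum principle applied to $\partial_sR=\Delta R+2|Ric|^2$ would force $R_{g_\infty}\equiv 0$, hence $Ric_{g_\infty}\equiv 0$, hence, in dimension $3$, $Rm_{g_\infty}\equiv 0$, contradicting $|Rm_{g_\infty}|(x_\infty,0)=1$. Thus $R_{g_\infty}>0$ everywhere, and since $M_\infty$ is closed, $R_{g_\infty}(\cdot,\delta_n)\geq 2c$ for some $c>0$. By the $C^\infty$-convergence, $R_{g_i}(\cdot,\delta_n)\geq c>0$ on $M$ for all large $i$; but then the scalar maximum principle $\frac{d}{ds}R_{\min}\geq\frac{2}{n}R_{\min}^2$ shows that $g_i$ cannot be continued past $s=\delta_n+\frac{n}{2c}$ — contradicting the fact that $g_i$, being a rescaling of an immortal solution, is defined for all $s\geq\delta_n$. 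Hence $g(t)$ must be of type III.

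I expect the main obstacle to lie in the first half of the argument: arranging the point-picking so that, once (\ref{kontrol}) is invoked, the curvature is bounded on all of $M\times[t_i-A_iQ_i^{-1},t_i]$ with $A_i\to\infty$, and securing a non-collapsing estimate that is uniform in $t$ rather than Perelman's a priori $T$-dependent one, so that the rescaled flows $g_i$ do not degenerate in the limit. The diameter hypothesis is exactly what makes the limit $g_\infty$ closed, and hence what allows its finite-time extinction — inherited by the nearby $g_i$ — to contradict immortality; for a merely complete noncompact limit with $R_{g_\infty}>0$ this last step would break down.
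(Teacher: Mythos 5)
Your blow-up strategy and your endgame (nonnegative scalar curvature, the strong maximum principle dichotomy, and transferring a positive lower bound on $R$ back to the rescaled immortal flows to force a finite-time singularity) are close in spirit to the paper's, but there is a genuine gap at the compactness step, and it is exactly the point you yourself flagged as the main obstacle. No uniform-in-time non-collapsing estimate follows from the hypotheses: Perelman's no-local-collapsing theorem gives a constant $\kappa=\kappa\left(g(0),T\right)$ valid only on a finite interval $[0,T]$, and the ``finiteness of $\nu(g(0))$'' you invoke is an extra assumption, not a consequence of (\ref{kontrol}); it fails, for instance, for flat $3$-manifolds and for the nilmanifold solutions, which satisfy (\ref{kontrol}) (for the Heisenberg nilmanifold one has $diam^2\,||Rm||_{\infty}\sim t^{-2/3}$) and which genuinely collapse, with $inj\left(M;g(t)\right)^2\,||Rm||_{\infty}(t)\rightarrow 0$. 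Thus the rescaled slices $g_i(0)$ may collapse, Hamilton's compactness theorem is not applicable, and there need be no smooth closed pointed limit $(M_\infty,g_\infty(s),x_\infty)$ at all; your final contradiction, which leans on $M_\infty$ being a closed manifold, therefore does not get off the ground. This is precisely why the paper takes the limit in the weaker sense of Lott's compactness theorem for Ricci flow on \'etale groupoids, which needs only the curvature bounds coming from the point-picking, uses the diameter bound (\ref{kontrol}) only to show the orbit space is compact and connected, and then runs the maximum principle arguments of Lemmas \ref{weakmaxlem} and \ref{main} on the groupoid, concluding that the eternal limit is Ricci flat, hence flat in dimension $3$, contradicting $|Rm|(\cdot,0)=1$ on the base orbit.

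A secondary, smaller problem: your appeal to Hamilton--Ivey pinching requires $Q_i\rightarrow\infty$, but failure of type III only gives $t_i\,||Rm||_{\infty}(t_i)\rightarrow\infty$; the curvature itself may remain bounded or even tend to zero, so $Q_i\rightarrow\infty$ is unjustified. This part is repairable: the limit flow (were it to exist) is ancient with bounded curvature, so $R\geq 0$ follows from the estimate $R\geq -\frac{n}{2(t-c)}$ by letting $c\rightarrow -\infty$, exactly as in Lemma \ref{main}, with no pinching needed. The collapsing issue of the first paragraph, however, cannot be fixed within your framework of smooth manifold limits; handling it is the actual content of the paper's proof.
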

  It is still an open question whether an immortal solution on a closed 3-dimensional manifold is nececessarily of type III.

  \textbf{Acknowledgements:} I would like to thank my adviser John Lott for helpful discussions and comments on earlier drafts of this paper.
  
 \section{ The proof}
 We first prove Theorem~\ref{big1} by assuming in addition a uniform lower bound on the injectivity radius. The proof of this simpler case will basically give us an outline of the proof for the general case. Given a Ricci flow solution $g(t)$ on a closed manifold $M$, the injectivity radius corresponding to each metric $g(t)$ will be denoted by $inj\left(M;g(t)\right)$. 
 
 \begin{thm}
 \label{big2}
Given $n \in \mathbb{N}$ and numbers $K,D, \iota>0$, there exists a function $F_{n,K,D,\iota}:[0,\infty) \rightarrow [0,\infty)$ with the following properties
\begin{enumerate}
\item $\displaystyle \lim_{t \rightarrow \infty}F_{n,K,D,\iota}(t)=0$.
\item If $g(t)$ is an immortal solution to the Ricci flow on a closed manifold $M$ of dimension $n$ such that $inj(M;g(t)) \geq \iota$,
 $||Rm||_{\infty}(t) \leq K$ and  $diam\left(M;g(t)\right) \leq D$ for all $t \geq 0$, then $||Ric||_{\infty}(t) \leq F_{n,K,D,\iota}(t)$ for all $t \geq 0$.
\end{enumerate}
\end{thm}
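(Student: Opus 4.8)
The plan is to argue by contradiction, converting the failure of property~(1) into the existence of an eternal Ricci flow on a closed manifold with bounded curvature whose Ricci tensor is nonzero somewhere, which cannot happen. First set $F_{n,K,D,\iota}(t)$ to be the supremum of $\|Ric\|_\infty(t)$ over all immortal solutions $g(\cdot)$ on closed $n$-manifolds $M$ with $\mathrm{inj}(M;g(s))\geq\iota$, $\|Rm\|_\infty(s)\leq K$ and $\mathrm{diam}(M;g(s))\leq D$ for every $s\geq 0$ (interpreted as $0$ if there is no such solution). Since $|Ric|\leq c(n)|Rm|$ for a dimensional constant $c(n)$, this supremum never exceeds $c(n)K$, so $F_{n,K,D,\iota}$ is a well-defined function $[0,\infty)\to[0,\infty)$ and property~(2) holds by construction; the content is the limit $\lim_{t\to\infty}F_{n,K,D,\iota}(t)=0$.

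If this fails, there are $\varepsilon>0$, times $t_i\to\infty$, immortal solutions $g_i(\cdot)$ on closed $n$-manifolds $M_i$ satisfying the three hypotheses, and points $x_i\in M_i$ with $|Ric_{g_i}|(x_i,t_i)\geq\varepsilon$. Put $\tilde g_i(t):=g_i(t+t_i)$, a Ricci flow on $M_i\times[-t_i,\infty)$; the time intervals exhaust $\mathbb R$, the bound $|Rm_{\tilde g_i}|\leq K$ holds throughout, the injectivity radii at the base points satisfy $\mathrm{inj}_{\tilde g_i(0)}(x_i)\geq\iota$, and the diameters stay $\leq D$. By Hamilton's compactness theorem for Ricci flows (Shi's derivative estimates supplying the higher-order bounds), a subsequence converges in the pointed $C^\infty$ Cheeger--Gromov sense to a pointed eternal Ricci flow $(M_\infty,g_\infty(\cdot),x_\infty)$, $t\in\mathbb R$, with $|Rm_{g_\infty}|\leq K$ and $\mathrm{diam}(M_\infty;g_\infty(t))\leq D$ for all $t$. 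In particular $M_\infty$ is closed, and by smoothness of the convergence $|Ric_{g_\infty}|(x_\infty,0)\geq\varepsilon>0$.

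The crux is then: \emph{an eternal solution to the Ricci flow on a closed manifold with bounded curvature is Ricci flat}. I would prove this with Perelman's $\lambda$-functional, $\lambda(t):=\lambda(g_\infty(t))$, the lowest eigenvalue of $-4\Delta_{g_\infty(t)}+R_{g_\infty(t)}$. Elementary comparison (Rayleigh quotient from below, a constant test function from above) gives $\min_M R\leq\lambda(t)\leq\bar R(t)$, with $\bar R(t)$ the mean scalar curvature, so $|R|\leq c(n)K$ forces $|\lambda(t)|\leq c(n)K$ for all $t$. Perelman's monotonicity formula gives $\tfrac{d}{dt}\lambda(t)\geq 2\int_M|Ric+\nabla^2 f_t|^2e^{-f_t}\,dV\geq 0$, where $f_t$ is the normalized minimizer; combining $|A|^2\geq\tfrac1n(\operatorname{tr}A)^2$, the Cauchy--Schwarz inequality for the probability measure $e^{-f_t}\,dV$, and the identity $\int_M(R+\Delta f_t)e^{-f_t}\,dV=\lambda(t)$ yields
\[
\frac{d}{dt}\lambda(t)\;\geq\;\frac{2}{n}\,\lambda(t)^2 .
\]
Because $\lambda$ is nondecreasing (hence has finite limits $\lambda_\pm$ at $\pm\infty$) and bounded, this forces $\lambda\equiv0$: if $\lambda(t_0)>0$ for some $t_0$, comparison with $\psi'=\tfrac2n\psi^2$ makes $\lambda$ blow up in finite forward time, contradicting boundedness, so $\lambda\leq 0$ everywhere; and if $\lambda_-<0$, then $\lambda'(t)\geq\tfrac1n\lambda_-^2>0$ for all sufficiently negative $t$, making $\lambda$ unbounded below as $t\to-\infty$, so $\lambda_-\geq 0$. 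Hence $\lambda_-=\lambda_+=0$ and by monotonicity $\lambda\equiv 0$. Then the monotonicity formula forces $Ric_{g_\infty}+\nabla^2 f_t\equiv 0$, so $g_\infty(t)$ is a gradient steady soliton for each $t$; taking the trace, $R=-\Delta f_t$, and comparing with the eigenvalue equation for $e^{-f_t/2}$ gives $|\nabla f_t|^2=\Delta f_t$, which integrates over the closed $M_\infty$ to $\nabla f_t\equiv 0$, whence $Ric_{g_\infty}\equiv 0$ (equivalently, one invokes the classical fact that compact steady gradient Ricci solitons are Ricci flat). This contradicts $|Ric_{g_\infty}|(x_\infty,0)\geq\varepsilon$ and proves property~(1).

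The main obstacle is \emph{not} in this argument: with the injectivity radius lower bound in hand, the compactness step is a direct application of Hamilton's theorem (collapse is excluded) and the rigidity is a clean corollary of Perelman's monotonicity. The real difficulty lies in removing that hypothesis, that is, in deducing Theorem~\ref{big1} from Theorem~\ref{big2}: without it the rescaled solutions $\tilde g_i$ may collapse, so there is no Cheeger--Gromov limit that is a Ricci flow on a closed $n$-manifold, and one must analyze instead the local structure of bounded-curvature collapse. That is where the substance of the general case lies, with the present proof serving as its model.
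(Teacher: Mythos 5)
Your proposal is correct, and its overall architecture (defining $F_{n,K,D,\iota}(t)$ as the supremum of $||Ric||_{\infty}(t)$ over admissible solutions, arguing by contradiction, time-shifting to $\tilde g_i(t)=g_i(t+t_i)$, invoking Hamilton's compactness theorem with the injectivity radius bound, and using the diameter bound to conclude the limit is a closed eternal solution with $|Ric|(x_\infty,0)\geq\varepsilon$) coincides with the paper's proof. Where you genuinely diverge is in the rigidity step, ``an eternal solution on a closed manifold is Ricci flat.'' The paper proves this with the maximum principle alone: the weak maximum principle gives $R\geq -\tfrac{n}{2t}$ for solutions on $[0,T]$, translating time across an eternal solution yields $R\geq 0$; if $R$ were positive somewhere, the strong maximum principle plus compactness would give a uniform positive lower bound at some time, and the ODE comparison $\tfrac{d\phi}{dt}=\tfrac{2}{n}\phi^2$ would force a finite-time singularity, contradicting eternity; then $R\equiv 0$ and the evolution equation $\partial_t R=\Delta R+2|Ric|^2$ kills $Ric$. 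You instead use Perelman's $\lambda$-functional: boundedness of $\lambda$, its monotonicity, and $\tfrac{d}{dt}\lambda\geq\tfrac{2}{n}\lambda^2$ force $\lambda\equiv 0$, hence a compact steady gradient soliton, hence Ricci flatness. Your argument is sound (modulo the standard caveat that $\lambda(t)$ need not be differentiable, so the differential inequality must be used in the forward-difference or integrated sense, as in the usual no-breather arguments), and it is arguably slicker on a closed manifold. What the paper's maximum-principle route buys, however, is exactly what you flag at the end: it transfers verbatim to Ricci flow on \'etale groupoids with compact connected orbit space (Lemmas~\ref{weakmaxlem} and~\ref{main}), which is the tool needed to handle collapse and prove Theorem~\ref{big1} without the injectivity radius hypothesis; the $\lambda$-functional machinery does not carry over so directly to that setting, so your method, while correct here, would not serve as the model for the general case in the way the paper intends.
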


\begin{proof}
 For every $t \geq 0$, we set $F_{n,K,D,\iota}(t)$ to be the supremum of $||Ric||_{\infty}(t)$ over all immortal solutions $\left(M^n,g(.)\right)$ satisfying the conditions of the given hypothesis. This function is well defined since the bound on the curvature gives us a bound on the Ricci curvature. We just need to prove that $\displaystyle \lim_{t \rightarrow \infty}F_{n,K,D,\iota}(t)=0$.
 
 Suppsose that this did not hold. Then, for some $\epsilon >0 $, there is a sequence $t_{i} \rightarrow \infty$  such that $F_{n,K,D,\iota}(t_i)> \epsilon$. This implies that there exists a sequence $\left(M,g_{i},x_i \right)$ of pointed immortal solutions to the Ricci flow such that $|Ric|(x_{i},t_{i})>\epsilon $. Consider the new sequence $\left(M,\tilde{g}_{i},x_i \right)$ where $\tilde{g}_i(t)=g_i(t+t_i)$ for $t \in [-t_i,\infty)$. Since we have a uniform bound on the curvature and a uniform lower bound on the injectivity radius, we can apply the Hamilton-Cheeger-Gromov compactness theorem \cite{hamilton}. After passing to a subsequence, we can assume that the sequence $\left(M_i,\tilde{g}_i(t),x_i\right)$ converges to an eternal Ricci flow solution $\hat{g}(t)$ on a pointed $n$-dimensional manifold $(\hat{M},\hat{x})$. The uniform bound on the diameters of $\left(M_i,\tilde{g}_{i}\right)$ imply that $\hat{M}$ is a closed manifold.
   
    The eternal Ricci flow solution $\hat{g}(t)$ must be Ricci flat. Indeed, based on \cite[ Lemma 2.18]{chow}, the solution is either Ricci flat or its scalar curvature is positive. Since $\hat{M}$ is closed, the latter cannot happen for, by \cite[Corollary 2.16]{chow}, there would be a finite time singularity and thus would contradict the fact that $\hat{g}(t)$ is an eternal solution. Therefore, the Ricci flow solution $\hat{g}$ satisfies $Ric\left(\hat{g}(t)\right)\equiv 0$ as claimed. On the other hand, the choice of the points $x_i \in M_i$ implies that $|Ric|(\hat{x},0) > \epsilon$. We reach a contradiction. The function $F_{n,K,D,\iota}(t)$ must satisfy $\displaystyle \lim_{t \rightarrow \infty}F_{n,K,D,\iota}(t)=0$. This completes the proof.
 \end{proof}

 The proof of the previous Theorem is a simple application of the following results.
 \begin{enumerate}
 \item Hamilton's compactness theorem for solutions to the Ricci flow.
 \item An eternal solution to the Ricci flow on a closed manifold is Ricci flat. This result is a simple application of the weak and strong maximum principles.
 \end{enumerate}
 In the proof of Theorem~\ref{big1}, we will use, in the bounded curvature case, a more general version of Hamilton's compactness theorem which was introduced by J. Lott in \cite{lott}. We will again construct a pointed sequence of Ricci flow solutions. But in this case, after passing to a subsequence if necessary, the pointed sequence will converge, in the sense of \cite[Section 5]{lott}, to an eternal Ricci flow solution on a pointed \'etale groupoid with compact connected orbit space. The result will then follow by a simple application of the weak and strong maximum principles in the case of \'etale groupoids. For basic informations about groupoids, we refer to \cite{bridson,moerdijk}. For information about Riemannian groupoids and their applications in Ricci flow, we refer to \cite{lott}. We will denote an \'etale groupoid by a pair $(\mathcal{G},M)$ where $\mathcal{G}$ is the space of ``arrows'' and $M$ is the space of units. The source and range maps will be denoted by $s: \mathcal{G} \rightarrow M$ and $r: \mathcal{G} \rightarrow M$ respectively.
    
 \begin{lma}[Weak maximum principle for scalars on groupoids]\label{weakmaxlem}
 Suppose $g(t)$, $ 0 \leq t \leq T < \infty$, is a smooth one-paramater family of metrics on a smooth \'etale groupoid $(\mathcal{G},M)$ with compact connected orbit space $W=M/\mathcal{G}$. Let $X(t)$ be a smooth $\mathcal{G}$-invariant time-dependent vector field on $M$ and let $F:\mathbb{R}\times [0,T] \rightarrow \mathbb{R}$ be a smooth function. Suppose that $u:M\times [0,T] \rightarrow \mathbb{R}$ is a smooth $\mathcal{G}$-invariant function which solves
\begin{align}
\frac{\partial u}{\partial t} \leq \Delta_{g(t)}u + < X(t),\nabla u > + F(u,t). \label{weakmax1}
\end{align}
Suppose further that $\phi: [0,T] \rightarrow \mathbb{R}$ solves
\begin{align}
\begin{cases}
 \frac{d\phi}{dt}&=F(\phi(t),t) \label{weakmax2} \\  
\phi(0)&=\alpha \in \mathbb{R}
\end{cases}
\end{align}

If $u(.,0)\leq \alpha$, then $u(.,t) \leq \phi(t)$ for all $t \in [0,T]$.
\end{lma}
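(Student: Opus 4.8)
The plan is to reduce the statement to the classical weak maximum principle on a closed manifold by passing to a suitable finite cover or, more precisely, by working on the orbit space. The essential point is that a $\mathcal{G}$-invariant function $u$ on $M$ descends to a function $\bar{u}$ on the orbit space $W = M/\mathcal{G}$, and likewise the metrics $g(t)$, the vector fields $X(t)$, the Laplacians, and the gradients all descend because they are $\mathcal{G}$-invariant; since $\mathcal{G}$ is étale, $W$ inherits (at least locally, away from the non-trivial isotropy) a smooth Riemannian structure for which the descended Laplacian $\Delta_{\bar{g}(t)}$ agrees with the one computed upstairs. So first I would make precise the descent: fix the compact connected orbit space $W$, and observe that $\bar{u}: W \times [0,T] \to \mathbb{R}$ satisfies the same differential inequality $\frac{\partial \bar{u}}{\partial t} \leq \Delta_{\bar{g}(t)} \bar{u} + \langle \bar{X}(t), \nabla \bar{u}\rangle + F(\bar{u},t)$ in the appropriate (orbifold / stratified) sense.

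Next I would run the standard argument. Replace $\phi$ by $\phi_\delta$, the solution of $\frac{d\phi_\delta}{dt} = F(\phi_\delta,t) + \delta$ with $\phi_\delta(0) = \alpha + \delta$, which exists on $[0,T]$ for $\delta$ small by continuous dependence on parameters, and note $\phi_\delta \to \phi$ uniformly as $\delta \to 0$. It suffices to show $u(\cdot,t) < \phi_\delta(t)$ for all $t \in [0,T]$ and all small $\delta > 0$. Suppose not; then since $W$ is compact (so that, working on $W$, the supremum of $\bar{u}(\cdot,t)$ is attained) and since $u(\cdot,0) \le \alpha < \alpha + \delta = \phi_\delta(0)$, there is a first time $t_0 \in (0,T]$ and a point (an orbit) $p_0$ with $\bar{u}(p_0,t_0) = \phi_\delta(t_0)$ and $\bar{u}(\cdot,t) < \phi_\delta(t)$ for $t < t_0$. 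At such an interior spatial maximum we have $\nabla \bar{u}(p_0,t_0) = 0$, $\Delta_{\bar{g}(t_0)} \bar{u}(p_0,t_0) \le 0$, and $\frac{\partial \bar{u}}{\partial t}(p_0,t_0) \ge \frac{d\phi_\delta}{dt}(t_0)$. Plugging into the differential inequality gives $\frac{d\phi_\delta}{dt}(t_0) \le \frac{\partial \bar{u}}{\partial t}(p_0,t_0) \le F(\bar{u}(p_0,t_0),t_0) = F(\phi_\delta(t_0),t_0) = \frac{d\phi_\delta}{dt}(t_0) - \delta$, a contradiction. Letting $\delta \to 0$ yields $u(\cdot,t) \le \phi(t)$ on $[0,T]$.

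The main obstacle is making the ``pass to the orbit space'' step rigorous when $\mathcal{G}$ has nontrivial isotropy, so that $W$ is not a manifold but an orbifold (or a more singular stratified space). I would handle this by not literally working on $W$ but instead using compactness of $W$ only to produce, for each time slice, a point $x_t \in M$ at which the $\mathcal{G}$-invariant function $u(\cdot,t)$ attains its maximum — this is legitimate because $u$ is continuous and $\mathcal{G}$-invariant and $W$ is compact, so $\sup_M u(\cdot,t) = \max_W \bar{u}(\cdot,t)$ is attained. The first-time/first-point analysis can then be carried out upstairs at $x_{t_0} \in M$: there, $x_{t_0}$ is an honest interior maximum of $u(\cdot,t_0)$ on the manifold $M$ (no boundary issues, since $u$ is globally defined and smooth on $M$), so the standard first- and second-derivative tests $\nabla u = 0$, $\Delta u \le 0$ apply directly, and the time-derivative inequality $\partial_t u(x_{t_0},t_0) \ge \phi_\delta'(t_0)$ follows from $u(x,t) \le \phi_\delta(t)$ for $t < t_0$ together with $u(x_{t_0},t_0) = \phi_\delta(t_0)$ by a one-sided difference quotient. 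This sidesteps the regularity of $W$ entirely; $W$ enters only through its compactness, which guarantees the maximum is attained despite $M$ itself possibly being noncompact.
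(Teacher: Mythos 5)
Your proposal is correct and follows essentially the same route as the paper: perturb the ODE by $\epsilon$ (your $\delta$), use compactness of the orbit space $W$ only to guarantee that the maximum of the $\mathcal{G}$-invariant function is attained at a first failure time, and then apply the first/second derivative tests and the one-sided time difference quotient upstairs at a point of $M$. Your concluding paragraph, which sidesteps any smooth structure on $W$ and works at $x_{t_0}\in M$, is exactly the paper's argument, so the earlier sketch of descending the PDE to $W$ is an unnecessary detour rather than a flaw.
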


\begin{proof}
 The proof is basically the same as the corresponding proof for closed manifolds (see\cite[p. 35-36]{topping}). For $\epsilon >0$, we consider the ODE 
\begin{equation}
\begin{cases}
\frac{d\phi_{\epsilon}}{dt}&=F(\phi_{\epsilon}(t),t) + \epsilon \label{weakmax3}\\  
\phi_{\epsilon}(0)&=\alpha +\epsilon \in \mathbb{R},
\end{cases}
\end{equation}
for a new function $\phi_{\epsilon}:[0.T] \rightarrow \mathbb{R}$. Using basic ODE theory, we see that the existence of $\phi$ asserted in the hypotheses and the fact that $T < \infty$ imply that for some $\epsilon_{0}>0$  there exists a solution $\phi_{\epsilon}$ on $[0,T]$ for any  $0<\epsilon \leq \epsilon_{0}$. Furthermore, $\phi_{\epsilon} \rightarrow \phi$ uniformly as $\epsilon \rightarrow 0$. Thus, it suffices to show that $u(.,t) < \phi_{\epsilon}(t)$ for all $t \in [0,T]$ and arbitrary $\epsilon \in (0,\epsilon_{0})$.

If this were not true, then we could choose $\epsilon \in (0,\epsilon_{0})$ and $t_{0} \in (0,T]$ where $u(.,t_{0})< \phi_{\epsilon}(t_0)$ fails. We may assume that $t_0$ is the earliest such time, and pick $x \in M$ such that $u(x,t_0)=\phi_{\epsilon}(t_0)$. Indeed, such a pair $(x,t_0)$ exists because the $\mathcal{G}$-invariant function $u$ descends to a continous function $\tilde{u}:W \times [0,T] \rightarrow \mathbb{R}$. Based on the hypothesis that $W$ is compact and connected, we can find $t_0$ such that it is the earliest time where $ \tilde{u}(.,t_{0})< \phi_{\epsilon}(t_0)$ fails and pick an orbit $O_{x}$ so that $\tilde{u}(O_{x},t_0)=\phi_{\epsilon}(t_0)$. This gives the required pair $(x,t_0)$. Now, we know that $u(x,s)-\phi_{\epsilon}(s)$ is negative for $s \in [0,t_0)$ and zero for $s=t_0$. Hence, we must have 
\begin{align*}
\frac{\partial u}{\partial t}(x,t_0)-\phi_{\epsilon}^{\prime}(t_0)\geq 0.
\end{align*}
Moreover, since $u(.,t_0)$ achieves a maximum at $x$, we have $\nabla u(x,t_0)=0$ and $\Delta u(x,t_0) \leq 0$.

Combining these facts with the inequality (\ref{weakmax1}) for $u$ and equation (\ref{weakmax3}) for $\phi_{\epsilon}$, we get the contradiction  
\begin{align*}
0 &\geq \left[\frac{\partial u}{\partial t} -\Delta_{g(t)}u -< X,\nabla u > -F(u,.)\right](x,t_0)\\
\Rightarrow 0   &\geq \phi^{\prime}_{\epsilon}(t_0)-F(\phi_{\epsilon}(t_0),t_0)=\epsilon>0.
\end{align*} 
This completes the proof.
\end{proof}

If $g(t)$ is a solution to the Ricci flow on a smooth \'etale groupoid $(\mathcal{G},M)$, then, by definition, the elements of $\mathcal{G}$ act by isometries for each Riemannian metric $g(t)$. More precisely, recall that for any \'etale groupoid $(\mathcal{G},M)$, there is an associated pseudogroup of local diffeomorphisms of $M$. The elements of this pseudogroup are diffeomorphisms $\phi : U \rightarrow V$ where $U$ and $V$ are open subsets of $M$ and $\phi$ is of the form $\phi=r\circ \alpha$  where $\alpha:U \rightarrow \mathcal{G}$ is a local section of the source map $s$, that is $\alpha$ satifies $s \circ \alpha = Id$. Given a Riemmanian metric $g$ on $M$, the elements of $\mathcal{G}$ are said to act by isometries if the elements of the associated pseudogroup are local isometries. Therefore, given a solution to the Ricci flow on a smooth \'etale groupoid, the geometric quantities derived from the metrics $g(t)$ are $\mathcal{G}$-invariant, that is they are invariant under the action of the associated pseudogroup. It follows that the basic evolution equations for the Ricci flow which are derived by applying the weak maximum principle on a closed manifold are still valid in the case of the Ricci flow on a smooth \'etale groupoid with compact connected orbit space. 

\begin{lma}
\label{main} 
Every eternal solution to the Ricci flow on a smooth \'etale groupoid with compact connected orbit space is Ricci flat.
\end{lma}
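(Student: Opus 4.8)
The plan is to carry out the classical argument that an eternal solution on a closed manifold is Ricci flat, replacing the maximum principles on closed manifolds by Lemma~\ref{weakmaxlem} together with its strong counterpart, and using throughout that a $\mathcal{G}$-invariant quantity descends to the compact connected orbit space $W = M/\mathcal{G}$. The starting point is the scalar curvature evolution equation $\frac{\partial}{\partial t}R = \Delta_{g(t)}R + 2|Ric|^2$, which is derived by a local, $\mathcal{G}$-invariant computation and hence, by the remarks preceding this lemma, holds for a solution on a smooth \'etale groupoid with compact connected orbit space. Since $|Ric|^2 \ge \frac{1}{n}R^2$ pointwise, $-R$ satisfies $\frac{\partial}{\partial t}(-R) \le \Delta_{g(t)}(-R) - \frac{2}{n}(-R)^2$; also $R$, being $\mathcal{G}$-invariant, descends to a continuous function on the compact $W$, so $R_{\min}(t) := \min_W R(\cdot,t)$ is finite for every $t$. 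Applying Lemma~\ref{weakmaxlem} to $-R$ (with $X \equiv 0$ and $F(r,t) = -\frac{2}{n}r^2$) on a finite time interval, after translating the initial time to $0$, and negating the conclusion, I obtain the comparison: for $t_1 < t_2$, $R_{\min}(t_2) \ge \psi(t_2)$ whenever $\psi$ solves $\psi' = \frac{2}{n}\psi^2$ with $\psi(t_1) = R_{\min}(t_1)$ on $[t_1,t_2]$.

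From this comparison I would extract two facts. First, if $R_{\min}(t_0) > 0$ for some $t_0$, then the solution of $\psi' = \frac{2}{n}\psi^2$ with $\psi(t_0) = R_{\min}(t_0)$ becomes unbounded as $t \uparrow t_0 + \frac{n}{2R_{\min}(t_0)}$, which would force $R_{\min}$ to be unbounded on a bounded time interval on which the immortal solution is smooth, hence on which $R$ is bounded — impossible; so $R_{\min}(t) \le 0$ for all $t$. Second, fix $t_0$ and take any $t_1 < t_0$; since $R_{\min}(t_1) \le 0$, the solution $\psi$ with $\psi(t_1) = R_{\min}(t_1)$ exists on all of $[t_1,t_0]$ and satisfies $\psi(t_0) \ge -\frac{n}{2(t_0 - t_1)}$, so $R_{\min}(t_0) \ge -\frac{n}{2(t_0-t_1)}$; letting $t_1 \to -\infty$, which is legitimate precisely because the solution is eternal, gives $R_{\min}(t_0) \ge 0$. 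Combining the two, $R_{\min} \equiv 0$: at every time the scalar curvature is nonnegative and attains the value $0$.

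To finish I would invoke the strong maximum principle for $\mathcal{G}$-invariant supersolutions of the heat equation on $(\mathcal{G},M)$ — proved just as Lemma~\ref{weakmaxlem}, by passing to the compact connected $W$. Since $R \ge 0$, $\frac{\partial}{\partial t}R - \Delta_{g(t)}R = 2|Ric|^2 \ge 0$, and $R$ attains its minimum value $0$ at each time, the strong maximum principle forces $R \equiv 0$ on $M \times (-\infty,t_0]$ for every $t_0$, hence on $M \times \mathbb{R}$; here connectedness of $W$ is what propagates the vanishing across the (possibly disconnected) $M$. Feeding $R \equiv 0$ back into the evolution equation yields $2|Ric|^2 = \frac{\partial}{\partial t}R - \Delta_{g(t)}R = 0$, so $Ric \equiv 0$ and the solution is Ricci flat.

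I do not expect a deep analytic obstacle: once the weak and strong maximum principles are available on \'etale groupoids with compact connected orbit space, the argument is formally identical to the closed-manifold case. The points requiring care are that \emph{eternality} is used in both directions — immortality alone gives only $R_{\min} \le 0$, and ancientness alone only $R_{\min} \ge 0$, so both halves are needed to pin $R_{\min}$ to $0$ — and that the strong maximum principle must be applied in the orbit-space formulation, since $M$ itself need not be connected, in order that $R \equiv 0$ genuinely hold on all of $M$.
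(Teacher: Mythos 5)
Your proof is correct and follows essentially the same route as the paper: Lemma~\ref{weakmaxlem} with the comparison ODE $\psi'=\tfrac{2}{n}\psi^{2}$ to get $R\ge 0$ from ancientness and to rule out strictly positive scalar curvature via finite-time blow-up (using immortality and compactness of $W$), then the strong maximum principle together with $\mathcal{G}$-invariance and connectedness of $W$ to force $R\equiv 0$, and finally the evolution equation $\partial_t R=\Delta R+2|Ric|^{2}$ to conclude $Ric\equiv 0$. The only differences are cosmetic: the paper propagates positivity of $R$ forward with the strong maximum principle and then derives the blow-up contradiction, whereas you first pin $R_{\min}\equiv 0$ and apply the strong maximum principle at a zero of $R$; note only that this principle is applied componentwise on $M$ (as in the paper, via \cite[Corollary 6.55]{chow}) and then spread across components through orbits, rather than being ``proved on $W$'' itself, which is the mechanism you in fact describe.
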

\begin{proof}
Suppose $(\mathcal{G},M,g(t))$ is an eternal solution to the Ricci flow on a smooth \'etale groupoid with compact connected orbit space $W$. Just as in the closed manifold case ( see \cite[Corollary 3.2.5]{topping}), one can apply Lemma \ref{weakmaxlem} to show that for any solution to the Ricci flow defined on an interval $[0,T]$ the scalar curvature satisfies 
 \begin{align}
 R \geq -\frac{n}{2t} \label{menli}
 \end{align}
 for $t \in [0,T]$. Fix $c \in \mathbb{R}$. By translating time, we see that the above inequality implies that
 \begin{align*}
 R(x,t) \geq -\frac{n}{2(t-c)}
 \end{align*}
 for $(x,t) \in M \times (c,\infty)$. Taking the limit as $c \rightarrow -\infty$, we conclude that $R(x,t) \geq 0$ for all $(x,t) \in M \times \mathbb{R}$.
 
 Now suppose that $R(x_0,t_0) >0$ for some $(x,t_0) \in M \times \mathbb{R}$. By the strong maximum principle (\cite[Corollary 6.55]{chow}), this implies that $R(x,t) >0$ for all $t \geq t_0$ and for all $x$ in the connected component of $M$ which contains $x_0$.  In fact, this holds for all $x \in M$. Indeed, since $W$ is connected, it follows that for any $x \in M$, one can find a sequence of points $x_0=p_0,q_0,p_1,q_1,\cdots,p_n,q_n=x$ such that $p_i$ and $q_i$ are in the same connected component of $M$, and $q_{i-1}$ and $p_i$ are in the same $\mathcal{G}$-orbit for each $i$. Since $R$ is $\mathcal{G}$-invariant, it follows that $R(p_1,t_0)=R(q_0,t_0)>0$. So, by the maximum principle, $R(x,t)>0$ for every $t \geq t_0$ and for all $x$ in the connected component of $M$ which contains $p_1$. In particular, $R(q_1,t)>0$ for all $t \geq t_0$. After applying this argument a finite number of times, we deduce that $R(x,t) >0$ for all $t \geq t_0$. This proves the claim.
  
 The $\mathcal{G}$-invariant function $R$ induces a continuous function $\tilde{R}$ on $W$ which satifsies $\tilde{R}(.,t)>0$ for $t \geq t_0$. Since $W$ is compact, there exists some $\alpha >0$ such that $\tilde{R}(.,t_0) \geq \alpha$. Hence the scalar curvature satisfies $R(.,t_0) \geq \alpha$. Just as  in the closed manifold case (see \cite[Corollary 3.2.3]{topping}), we can apply Lemma \ref{weakmaxlem} to show that we have a finite time singularity. This contradicts the hypothesis that the Ricci flow solution is eternal. Therefore, we must have $R \equiv 0$. Based on the evolution equation of the scalar curvature
 \begin{align*}
 \frac{\partial R}{\partial t} = \Delta R + 2|Ric|^2
 \end{align*}
 we deduce that $Ric \equiv 0$. This completes the proof.
 \end{proof}

 \begin{proof}
  (Theorem 1).We define the function $F_{n,K,D}(t)$ as before: for every $t \geq 0$, we set $F_{n,K,D}(t)$ to be the supremum of $||Ric||_{\infty}(t)$ over all immortal solutions $\left(M^n,g(.)\right)$ satisfying the conditions of the given hypothesis.We just need to prove that $\displaystyle \lim_{t \rightarrow \infty}F_{n,K,D}(t)=0$. This will again be proved by contradiction.
 
 Suppose that this did not hold. Then, for some $\epsilon >0 $, there is a sequence $t_{i} \rightarrow \infty$  such that $F_{n,K,D}(t_i)> \epsilon$. This implies that there exists a sequence $\left(M,g_{i},x_i \right)$ of pointed immortal solutions to the Ricci flow such that $|Ric|(x_{i},t_{i})>\epsilon $. Consider the new sequence $\left(M,\tilde{g}_{i},x_i \right)$ where $\tilde{g}_i(t)=g_i(t+t_i)$ for $t \in [-t_i,\infty)$. After passing to a subsequence, we can assume that the sequence $\left(M_i,\tilde{g}_i(t),x_i\right)$ converges to an eternal Ricci flow solution $\hat{g}(t)$ on a pointed $n$-dimensional \'etale groupoid $(\hat{\mathcal{G}},\hat{M},O_{\hat{x}})$ (see \cite[Theorem 5.12]{lott}). The uniform bound on the diameters of $\left(M_i,\tilde{g}_{i}\right)$ imply that the quotient space $W$ equipped with the metric induced by $g(t)$ is a locally compact complete length space with finite diameter. So, by the Hopf-Rinow theorem (\cite[ Part I Proposition 3.7]{bridson}), $W$ is a compact connected space. We can now apply Lemma~\ref{main} to deduce that the eternal Ricci flow solution on the \'etale groupoid $(\hat{\mathcal{G}},\hat{M},O_{\hat{x}})$ is Ricci flat. On the other hand, the choice of the points $x_i \in M_i$ implies that $|Ric|(.,0) > \epsilon$ on $O_{\hat{x}}$. We reach a contradiction. The function $F_{n,K,D,}(t)$ must satisfy $\displaystyle \lim_{t \rightarrow \infty}F_{n,K,D}(t)=0$. This completes the proof.
 \end{proof} 
 
 The proof of Theorem~\ref{bigdos} will also follow from Lemma~\ref{main}
 \begin{proof}
 (Theorem 3).The proof will again be by contradiction. Suppose the solution  $g(t)$ satisfies $ \displaystyle sup_{M\times [0,\infty)}t|Rm|=\infty$. As outlined in \cite[Chapter 8, Section 2]{chow}, we can construct a sequence of pointed solutions to the Ricci flow which converges to an eternal solution to the Ricci flow on an \'etale groupoid. We first choose a sequence of times $T_i \rightarrow \infty$ and then we choose a sequence $(x_i,t_i) \in M \times (0, T_i)$ such that
 \begin{align*}
 t_i(T_i-t_i)|Rm|(x_i,t_i)  = sup_{M \times [0,T_i]} t(T_i-t)|Rm|(x,t)
 \end{align*}
 For each $i$, we set $K_i = |Rm|(x_i,t_i)$ and we define the pointed Ricci flow solution $(M,g_i,x_i)$ by $g_i(t)=K_ig(t_i+\frac{t}{K_i})$. If we set $\alpha_i=-t_i K_i$ and $\omega_i=(T_i-t_i)K_i$, then it is shown in \cite[Chapter 8, Section 2]{chow} that $\alpha_i \rightarrow -\infty$ and $\omega_i \rightarrow \infty$, and that the curvature $Rm(g_i)$ of the metric $g_i$ satisfies 
 \begin{align*}
 |Rm(g_i)|(x,t) \leq \frac{\alpha_i}{\alpha_i-t}\frac{\omega_i}{\omega_i-t}
 \end{align*}
  for $(x,t) \in M \times [\alpha_i,\omega_i]$. Furthermore, the condition (\ref{kontrol}) implies that 
  \begin{align*}
  diam(M;g_i(0))^2 = K_i diam(M;g(t_i))^2 < C
  \end{align*}
  Hence, after passing to a subsequence if necessary, the pointed sequence $(M,g_i,x_i)$ converges to an eternal Ricci flow solution $\hat{g}(t)$ on a pointed $3$-dimensional \'etale groupoid $(\hat{\mathcal{G}},\hat{M},O_{\hat{x}})$ with compact connected orbit space. By construction, $|Rm(\hat{g})|(.,0)=1$ on the orbit $O_{\hat{x}}$. But, by Lemma~\ref{main}, the solution $\hat{g}(t)$ is Ricci flat and, hence, flat since we are in the 3-dimensional case. We get a contradiction. Therefore, the immortal solution $g(t)$ must be of type III.
  \end{proof}

 \address{University Of California, Berkeley, 840 Evans Hall, Berkeley, CA, 94720
 \emph{E-mail address}: \email{hilaire@math.berkeley.edu}
 \end{document}